\newtheorem{thm}{Theorem}
\newtheorem{lem}[thm]{Lemma}
\newtheorem{pro}[thm]{Proposition}
\def\a{\alpha}
\def\b{\beta}
\def\g{\gamma}
\def\P{\mathcal{P}}
\def\H{\mathcal{H}}
\def\Bbb{\mathbb}
\def\rk{{\rm rank}}
\def\proof{{\noindent \bf Proof. \hspace{.01in}}}
\newcommand{\qed}{\hspace{.1in} \vrule height 7pt width 5pt depth 0pt \medskip}
\begin{document}


\title{\Large Enumerating extreme points of
the polytopes of stochastic tensors: an optimization approach}
\author{
Fuzhen Zhang${}^{\,\rm a}$
,\;
Xiao-Dong Zhang${}^{\,\rm b}$
\\
\footnotesize{${}^{\,\rm a}$ Nova Southeastern University, Fort Lauderdale, USA; zhang@nova.edu}\\
\footnotesize{${}^{\,\rm b}$ Shanghai Jiao Tong University, Shanghai, China;  xiaodong@sjtu.edu.cn}}

\date{}
\maketitle

\bigskip
 \hrule
\bigskip

\noindent {\bf Abstract.}
This paper is concerned with the extreme points of
the polytopes of stochastic tensors.
By a tensor we mean a multi-dimensional array over the real number field.
A line-stochastic tensor is a nonnegative tensor in which the sum of all entries on each line (i.e., 1 free index)  is equal  to 1; a plane-stochastic tensor is a nonnegative tensor in which the sum of all entries on each plane (i.e., 2 free indices)  is equal to 1.
In enumerating extreme points  of the  polytopes of line- and plane-stochastic tensors of order 3 and dimension $n$, we consider the approach by linear optimization  and present new lower and upper bounds.
We also study the coefficient matrices that define the polytopes.

\medskip
\noindent {\em AMS Classification:}
{Primary 52B11; 15B51.}
\medskip

\noindent {\em Keywords:} {Birkhoff polytope, Birkhoff-von Neumann theorem,  extreme point,   line-stochastic tensor, plane-stochastic tensor,  polytope, tensor,
  vertex.}

\bigskip
\hrule

\date{}
\maketitle


\section{Introduction}

Determination of extreme points of a convex set (in certain space) is basic and of central importance in convex analysis (see, e.g., \cite[Chapter 4]{Web94}) and operations research (see, e.g., \cite[Chapter 7]{MarDover13}) due to the fact that objective functions to be studied with constraints are usually defined over convex sets. When the convex set, say, a polytope,  is defined via a system of linear (in)equalities, an approach of linear programming or optimization  plays an important role in finding the extreme points (or vertices)  by  finding the optimal (i.e., basic) solutions of the system.
Existence, structure and enumeration of extreme points are thematic.
 This paper is concerned with enumeration of extreme points
 of some polytopes.

We begin with some basic terminologies  in the theory of polytopes.

A {\em polytope} $\P \subset \Bbb R^d$ is  the convex hull of a finite set of points
in $\Bbb R^d$ (see, e.g., \cite[p.\,8]{Bar02} or \cite[p.\,4]{Zie95}).
  Equivalently,
   a polytope is a bounded intersection of finitely many  closed  halfspaces
    in the form $\{x\in \Bbb R^d\mid Ax\leq b\}\subset \Bbb R^d$ (\cite[p.\,29]{Zie95}),
    where $A$ is an $m\times d$ real matrix for some positive integer $m$ and $b\in \Bbb R^m$.
   The {\em dimension} of the polytope $\P$ is the minimum of dimensions of all affine spaces that contain $\P$. It turns out  that the dimension of $\P$ is equal to the dimension of the null space of $A$; that is, $\dim (\P)=d-\rk (A)$.

Recall the   Birkhoff polytope $ \mathcal{B}_n$ of $n\times n$ doubly stochastic matrices. The celebrated  Birkhoff-von Neumann Theorem states that $\mathcal{B}_n$ is the convex hull of all $n\times n$  permutation matrices (see, e.g., \cite[p.\,159]{ZFZbook11}).
As a polytope in $\Bbb R^{n^2}\hspace{-.05in},$    $\;\mathcal{B}_n$ has dimension $(n-1)^2$,  $n^2$ facets, and $n!$ vertices (see, e.g., \cite[p.\,20]{Zie95}).  Carath\'{e}odory's theorem ensures that
every  $n\times n$ doubly stochastic matrix can be written as a convex combination of at most $n^2-2n+2$ permutation matrices. 
Geometrical and combinatorial properties of  the Birkhoff polytope  have been extensively studied; see, e.g.,  \cite{BrCao, BrCs75, CLN14, Paf15};
see also \cite[pp.\,47--52]{MOA11} for a brief account on the topic.


We call a  hyperplane $\H$ {\em supporting hyperplane} of a polytope $\P$ if $\H\cap \P\not = \emptyset$ and if  $\P$ is contained in one of the two closed
halfspaces bounded by $\H$.  The intersection $\mathcal{F}=\H\cap \P$ is a {\em face} of $\P$. A {face} of  dimension 0 is called a {\em vertex} or an {\em extreme point} of $\P$;
a {\em facet} of $\P$ is a face of dimension $\dim (\P)-1$. Determining  the $f$-vector for a polytope,
i.e., $f(\P)=(f_0, f_1, \dots, f_{d-1})$, where $f_i$ is the number of faces of dimension $i$,  is
an uneasy task in the combinatorial theory of convex polytopes \cite[p.\,66]{Bro83}.
Special attention has been paid to $f_0(\P)$, the number of vertices (as well as $f_{d-1}(\P)$, the number of facets).

By the Krein-Milman Theorem (see, e.g., \cite[p.\,121]{Bar02}),  every polytope
is the convex hull of its vertices (extreme points). It is a fundamental and central  question in the polytope theory to find the number and structures of the vertices (or faces) for a given polytope.
This is  an extremely difficult problem in general as
  the  McMullen Upper Bound Theorem  shows (see, e.g., \cite{Bro83} or \cite[p.\,254]{Zie95}).

We are concerned with the polytopes of stochastic tensors in this paper.
By a {\em  tensor} we mean a multidimensional array (i.e., matrix of higher order or hypermatrix) over the real number field $\Bbb R$.
Let ${n_1, n_2, \dots, n_d}$ be positive integers. We write $A=(a_{i_1i_2\dots i_d})$,
$i_k=1, 2,  \dots, n_k$, $k=1, 2,  \dots, d$,  for an $n_1\times n_2\times  \cdots \times n_d$  tensor   of order $d$ (the number of indices).
The tensors of order 1 (i.e., $d=1$) are vectors in $\Bbb R^{n_1}$, while the 2nd order tensors are just the regular $n_1\times n_2$ matrices. A 3rd order tensor, namely, an $n_1\times n_2\times n_3$ tensor,  may be viewed as a {book} of  $n_3$ pages (slices), each page is
an $n_1\times n_2$ matrix.

If $n_1= n_2=\dots= n_d=n$, we say that
$A$ is of order $d$ and dimension $n$  or  $A$ is an $\overbrace{n\times   \cdots \times n}^d$ tensor (also referred to as
 {\em tensor cube} or
 {\em 3D matrix}; see \cite{WZ17Per}).

 For a nonnegative tensor $A=(a_{i_1i_2\dots i_d})$
of order $d$ and dimension $n$, we say that $A$ is {\em line-stochastic} \cite{FiSw85}
  if the sum of  the entries on each line is   1:
$$\sum_{i=1}^n a_{\cdots i\cdots }=1$$
and  $A$ is {\em plane-stochastic} \cite{BrCs75laa} if the sum of all elements on every plane is 1:
$$\sum_{i, j=1}^n a_{\cdots i\cdots j \cdots}=1.$$

 We denote by $\mathcal{L}_n$ the polytope of  the $n\times n\times n$ (triply) line-stochastic tensors
and by $\mathcal{P}_n$ the $n\times n\times n$ (triply) plane-stochastic tensors.

More generally, let $A$ be a nonnegative tensor of order $d$ and dimension $n$ and let $1\leq k\leq d$. A {\em $k$-plane} of $A$ is  a sub-tensor of $A$ with $k$ free indices, i.e., fixing $d-k$ indices.
 If the sum of the entries of $A$ on every $k$-plane is 1, then $A$ is said to be {\em $k$-stochastic} (see, e.g.,  \cite{BrCs91, WZ17Per}). So,  being line-stochastic is
 1-stochastic; being plane-stochastic is 2-stochastic.

All $k$-stochastic nonnegative tensors of order $d$ and dimension $n$ comprise
a polytope (if not empty) in $\Bbb R^{n^d}$.
The Krein-Milman Theorem ensures  that  every polytope
is the convex hull of its extreme points.
It is an interesting and unsolved problem (see, e.g., \cite{LZZ17}) to determine the extreme points
  for a polytope of stochastic tensors. For general tensors and their properties, the reader is referred to the books \cite{ WeiBook16, QLbook17}.

  In Section \ref{Sec:Preli}, we start with some properties of extreme points of a polytope that we will use in Section \ref{Sec:NewBounds} and state our problems in the setting of linear programming. In section \ref{Sec:LPBounds},  we use the approaches from
  linear programming and game theory to provide new upper bounds. In Section \ref{Sec:equations}, we study and characterize  the  polytopes $\mathcal{L}_n$ and $\mathcal{P}_n$, finding the dimensions of the polytopes and the ranks of the coefficient matrices. In Section \ref{Sec:NewBounds}, we present a sharper lower bound
  for $f_0(\mathcal{L}_n)$ and new bounds for $f_0(\mathcal{P}_n)$.  In the last section, Section~\ref{Sec:LP}, we show a result about the relation of the extreme points of
  $\mathcal{L}_n$ and $\mathcal{P}_n$ and conclude the paper with a question for further studies.

\section{Preliminaries}\label{Sec:Preli}

If $A=(a_{ij})$ is an $n\times n$ doubly stochastic matrix, then the matrices
$A_1=(a_{i\a(j)}
)$, $A_2=(a_{\a(i)\a(j)}
)$,   and $A_3=(a_{\a(i)\b(j)}
)$ are also  doubly stochastic matrices for all $\a, \b \in S_n$, where $S_n$ is the symmetric group of order $n$. Moreover,
$A$ is a permutation  matrix  if and only if $A_1,$ $ A_2,$ and $ A_3$ are permutation  matrices.
That is,  permutation transformations preserve extremal properties of extremal matrices.
Inductively, for tensors of higher orders,  we have the following.

\begin{pro} Let  $A=(a_{i_1i_2\cdots i_d})$ be a nonnegative tensor of order $d$ and dimension $n$. Then
 $A$ is $k$-stochastic if and only if $A^{\pi}:=(a_{\pi_1(i_1)\pi_2(i_2)\cdots \pi_d(i_d)})$ is
 $k$-stochastic; and $A$ is extremal (for  the polytope)  if and only if $A^{\pi}$ 
 is extremal, where
 $\pi_1, \pi_2, \dots, \pi_d\in S_n$. In particular,
  an $n\times n\times n$ line- or plane-stochastic tensor $A=(a_{ijk})$ is extremal if and only if $A^{\pi}=(a_{\a(i)\b(j)\g(k)})$ and
  $A^{\pi}=(a_{\a(i)\a(j)\a(k)})$ are extremal for all $\a, \b, \g\in S_n$.
\end{pro}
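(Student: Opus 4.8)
The plan is to verify the two claims in turn---preservation of $k$-stochasticity and preservation of extremality---and to observe that both reduce to the elementary fact that applying a permutation $\pi_m\in S_n$ to the $m$-th index is an invertible, linear, entrywise-nonnegative, index-permuting operation on the space $\Bbb R^{n^d}$ of tensors.

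First I would establish that $A$ is $k$-stochastic if and only if $A^\pi$ is $k$-stochastic. Fix a choice of $d-k$ positions to hold constant and a choice of values for those held indices; this determines a $k$-plane of $A^\pi$. Writing out $\sum a_{\pi_1(i_1)\cdots\pi_d(i_d)}$ over the $k$ free indices, and using that each $\pi_m$ is a bijection of $\{1,\dots,n\}$, the summation index $i_m$ can be replaced by $j_m=\pi_m(i_m)$ without changing the value of the sum; the held indices are likewise just relabelled. Hence the sum over this $k$-plane of $A^\pi$ equals the sum over the correspondingly relabelled $k$-plane of $A$. Since $\pi$ permutes the set of all $k$-planes among themselves, all $k$-plane sums of $A^\pi$ equal $1$ exactly when all $k$-plane sums of $A$ equal $1$. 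The same argument shows $A^\pi$ is nonnegative iff $A$ is. This proves the first equivalence, and in particular shows the map $A\mapsto A^\pi$ is an affine bijection of the polytope onto itself (its inverse being $A\mapsto A^{\pi^{-1}}$, where $\pi^{-1}=(\pi_1^{-1},\dots,\pi_d^{-1})$).

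For the extremality statement I would use the standard characterization that a point $x$ of a convex set $K$ is an extreme point iff $x$ cannot be written as $\frac12(y+z)$ with $y,z\in K$ and $y\ne z$. Suppose $A$ is extremal but $A^\pi=\frac12(B+C)$ with $B,C$ in the polytope and $B\ne C$. Applying the inverse map $(\cdot)^{\pi^{-1}}$, which is linear and carries the polytope onto itself, gives $A=\frac12\bigl(B^{\pi^{-1}}+C^{\pi^{-1}}\bigr)$ with $B^{\pi^{-1}},C^{\pi^{-1}}$ in the polytope and distinct (since $(\cdot)^{\pi^{-1}}$ is injective), contradicting extremality of $A$. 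The converse follows by symmetry, replacing $\pi$ by $\pi^{-1}$. The two displayed particular cases are just this with $d=3$: take $(\pi_1,\pi_2,\pi_3)=(\a,\b,\g)$ for the line- or plane-stochastic polytope in general, and $(\pi_1,\pi_2,\pi_3)=(\a,\a,\a)$ for the diagonal action, noting that when all three permutations coincide the $k$-stochastic property (for $k=1,2$) is still preserved by the argument above. One should also note the base case $d=2$ recovers the classical observation, recorded before the proposition, that $(a_{ij})\mapsto(a_{\a(i)\b(j)})$ preserves double stochasticity and permutation matrices; the ``inductive'' remark in the statement is just the passage from a single index to all $d$ indices, handled uniformly by the relabelling computation above.

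The only mildly delicate point---the ``main obstacle,'' though it is minor---is bookkeeping: making the index substitution $j_m=\pi_m(i_m)$ simultaneously in all free summation indices while tracking that the held indices of a $k$-plane of $A^\pi$ correspond under $\pi$ to the held indices of a genuine $k$-plane of $A$, so that the correspondence of $k$-planes is a bijection and no $k$-plane is missed or double-counted. Once that correspondence is set up cleanly, everything else is an immediate consequence of linearity and invertibility of $A\mapsto A^\pi$, so I would keep the write-up short and push all the combinatorial relabelling into a single line of the first paragraph. \qed
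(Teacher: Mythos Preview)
Your proof is correct. In fact the paper does not supply a proof of this proposition at all: it is stated without argument, preceded only by the sentence ``Inductively, for tensors of higher orders, we have the following'' and followed by a citation to \cite{KLX16} for the special case. The closest thing to a proof in the paper is the one-line argument for the subsequent Proposition~2 (the special case $A\mapsto P^tAP$), which uses exactly your idea: $A$ fails to be extremal iff $A=rE+(1-r)F$ with $E\neq F$ in the polytope, iff $A^\pi=rE^\pi+(1-r)F^\pi$. So your approach matches the spirit of the paper's treatment, only with the bookkeeping for $k$-stochasticity made explicit where the paper leaves it implicit.
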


The second part of the proposition has appeared in
\cite[Corollary 2.6]{KLX16}.

Let $A$ be an $n\times n\times n$ stochastic tensor with pages $A_1, A_2, \dots, A_n$;
each  $A_i$ is doubly stochastic, $i=1, 2, \dots, n$.
Through flattening, we can simply write $A=[A_1, A_2, \dots, A_n]$.
Let $P$ be an $n\times n$ permutation matrix. Denote
$$P^tAP=[P^tA_1P, P^tA_2P, \dots, P^tA_nP] \quad (\mbox{$t$ for transpose}).$$

\begin{pro}  $A$ is a vertex  if and only if $P^tAP$ is a vertex   for any $P$.
\end{pro}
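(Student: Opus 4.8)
The plan is to view this as a direct corollary of the preceding proposition, so the main task is to recognize that the map $A\mapsto P^tAP$ is exactly a coordinate permutation of the form $A\mapsto A^{\pi}$ covered there. First I would unpack the flattened notation: if $A=[A_1,\dots,A_n]$ with $(A_k)_{ij}=a_{ijk}$, then the $k$-th page of $P^tAP$ is $P^tA_kP$, whose $(i,j)$ entry is $\sum_{i',j'} P_{i'i} (A_k)_{i'j'} P_{j'j}$. Writing $P$ as the permutation matrix of $\sigma\in S_n$ (so $P_{i'i}=1$ iff $i'=\sigma(i)$, equivalently $i'i$-entry is $[\sigma(i)=i']$), this entry is $a_{\sigma(i)\,\sigma(j)\,k}$. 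Hence $P^tAP = (a_{\sigma(i)\sigma(j)k})$, which is precisely $A^{\pi}$ with $\pi_1=\pi_2=\sigma$ and $\pi_3 = \mathrm{id}$, a legitimate triple in $S_n^3$.

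Next I would invoke the previous proposition directly: for an $n\times n\times n$ line- or plane-stochastic tensor, $A$ is extremal if and only if $(a_{\a(i)\b(j)\g(k)})$ is extremal for all $\a,\b,\g\in S_n$. Specializing to $\a=\b=\sigma$, $\g=\mathrm{id}$ shows that if $A$ is a vertex then $P^tAP$ is a vertex. Since ``vertex'' and ``extreme point'' coincide with being extremal for a polytope (as noted in Section~\ref{Sec:Preli}), this is the forward direction. For the converse, note that if $P$ is the permutation matrix of $\sigma$, then $P^t$ is the permutation matrix of $\sigma^{-1}$, so $(P^t)^t A P^t = PAP^t$ inverts the operation: applying the same statement to $B=P^tAP$ and the permutation $\sigma^{-1}$ gives that $B$ a vertex implies $PBP^t = P(P^tAP)P^t = A$ is a vertex. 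So the equivalence holds in both directions for every $P$.

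The only subtlety — and it is minor — is a bookkeeping check that the polytope membership is preserved, i.e.\ that $P^tAP$ is again line- (resp.\ plane-) stochastic and nonnegative, so that ``vertex'' refers to the same polytope on both sides; but this is exactly the first half of the previous proposition applied to the same triple $(\sigma,\sigma,\mathrm{id})$, together with the obvious fact that permuting indices preserves nonnegativity. I do not anticipate a real obstacle here: the entire content is the identification $P^tAP = A^{\pi}$ and then quoting Proposition~\ref{...}. One could alternatively give a self-contained argument: the map $A\mapsto P^tAP$ is a linear bijection of $\mathbb{R}^{n^3}$ carrying the polytope onto itself (its inverse is $B\mapsto PBP^t$), and linear bijections between polytopes map vertices to vertices; this avoids referencing the componentwise permutation at all and may be the cleaner write-up.
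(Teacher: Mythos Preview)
Your proposal is correct. Your primary route---identifying $P^tAP$ with $A^{\pi}$ for $\pi=(\sigma,\sigma,\mathrm{id})$ and then quoting the preceding proposition---is sound, but it is not what the paper does. The paper instead gives the direct one-line argument you sketch at the very end: $A$ is not extremal iff $A=rE+(1-r)F$ with $0<r<1$ and $E\neq F$ in the polytope, iff $P^tAP=rP^tEP+(1-r)P^tFP$; i.e., the linear bijection $A\mapsto P^tAP$ carries the polytope to itself and hence preserves (non-)extremality. Your main approach buys a clean reduction to an already-stated result, at the cost of the index computation verifying $P^tAP=(a_{\sigma(i)\sigma(j)k})$; the paper's approach is self-contained and avoids that bookkeeping entirely, needing only linearity and bijectivity of conjugation by $P$. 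Since you already articulated this alternative, the cleanest write-up is simply to promote it to the main argument.
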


\proof
  $A$ is not extremal if and only if  $A=r E+(1-r)F$, where $0<r <1$ and $E\not =F$   in
$\mathcal{L}_n$ (or $\mathcal{P}_n$), if and only if  $P^tAP=r P^tEP+(1-r)P^tFP$.
\qed

By a (0,1)-tensor we mean a tensor in which every entry is either 0 or 1.  A line-permutation (tensor) is a (0,1)-tensor in which every line has exactly one 1, and a plane-permutation is a (0,1)-tensor in which every plane has exactly one 1. (A $k$-permutation is
a (0,1)-tensor in which every $k$-plane has exactly one 1.)

 For $n=2, 3$,
$\mathcal{L}_n$ and $\mathcal{P}_n$ have been well studied. For example,
$\mathcal{L}_3$ has 66 vertices, 12 of which are (0,1)-type and 54 are non-(0,1)-type, containing $\frac12$'s. Moreover,
the number of positive entries in any extremal tensor of
  $\mathcal{L}_n$ is no more than  $3n^2-3n+1$, and
the number of positive entries in any extremal tensor  of $\mathcal{P}_n$ is no more than  $3n-2$ (see, e.g., \cite{FiSw85, JurRys68}).

The triply line-stochastic tenors form the polytope in $\Bbb R^{n^3}$
\begin{equation}\label{LEq}
\mathcal{L}_n=\{ A=(a_{ijk}) \mid  a_{ijk}\geq 0, \; 1\leq i, j, k\leq n \},
 \end{equation}
 where
\begin{equation}\label{LEqs}
\begin{cases}
\vspace{.06in}
\sum_{i=1}^n a_{ijk}=1, & \quad 1\leq  j, k\leq n,  \\
\vspace{.06in}
\sum_{j=1}^n a_{ijk}=1, & \quad 1\leq  i, k\leq n, \\
\vspace{.06in}
 \sum_{k=1}^n a_{ijk}=1, & \quad 1\leq  i, j\leq n
 \end{cases}
 \end{equation}
and the triply plane-stochastic tensors form the polytope in $\Bbb R^{n^3}$
 \begin{equation}
\mathcal{P}_n=\{ A=(a_{ijk}) \mid  a_{ijk}\geq 0, \; 1\leq i, j, k\leq n \}
 \end{equation}
 where
\begin{equation}\label{PEqs}
\begin{cases}
\vspace{.06in}
\sum_{i, j=1}^n a_{ijk}=1, & \quad 1\leq  k\leq n,  \\
\vspace{.06in}
\sum_{j, k=1}^n a_{ijk}=1, & \quad 1\leq  i\leq n,  \\
\vspace{.06in}
 \sum_{i, k=1}^n a_{ijk}=1, & \quad 1\leq  j\leq n.
 \end{cases}
 \end{equation}

 Extremal values of certain functions defined on $\mathcal{L}_n$ and $\mathcal{P}_n$ with integral constraints for (\ref{LEqs}) and  (\ref{PEqs})
(i.e., $a_{ijk}\in \{0, 1\}$),
 are known as multi-index assignment problems in linear programming and optimization; and they are
 $\mathcal{NP}$-hard (see, e.g., \cite[pp.\,9--11 and Chapter 10]{BDM09}).

Viewing each $A=(a_{ijk})\in \mathcal{L}_n$ as an element of $\Bbb R^{n^3}$,
  we write the equations in  (\ref{LEqs}) in a  linear equation system form:
 \begin{equation}\label{Eq:L}
 Lx=l,
 \end{equation}
  where
 $L$ is a $3n^2 \times n^3$ (0,1)-matrix,
 $x\in \Bbb R^{n^3}$ is a nonnegative solution (i.e., $x_{ijk}\geq 0$, arranged in lexicographic order), and
 $l=(1, \dots, 1)^t\in \Bbb R^{3n^2}$.

We do the same for the equations in  (\ref{PEqs}) to get
 \begin{equation}\label{Eq:P}
 Px=p,
 \end{equation}
  where
 $P$ is a $3n \times n^3$ (0,1)-matrix,
  $x\in \Bbb R^{n^3}$ is a nonnegative solution (i.e., $x_{ijk}\geq 0$, arranged in lexicographic order), and
 $p=(1, \dots, 1)^t\in \Bbb R^{3n}$.

Extreme points of $\mathcal{L}_n$ and $\mathcal{P}_n$ are   optimal solutions of (\ref{Eq:L}) and (\ref{Eq:P}).

\section{Bounds for $f_0(\mathcal{L}_n)$}\label{Sec:LPBounds}

Let $f_0(\mathcal{L}_n)$ be the number of vertices of $\mathcal{L}_n$.
Estimation of $f_0(\mathcal{L}_n)$  has been witnessed
 in three  ways:
{(1)}.  Combinatorial method using Latin squares.
Ahmed,  De Loera, and  Hemmecke (see \cite[Theorem 2.0.10]{Mayathesis04} or \cite[Theorem~0.1]{Mayathesis03})
 gave an explicit lower bound $\frac{(n!)^{2n}}{n^{n^2}}$. This lower bound is immediately
 superiorized by the one obtained by Latin squares, because the number of Latin squares of order $n$, denoted by $L(n)$, is equal to the number of
 $n\times n\times n$ line-stochastic (0,1)-tensors (see \cite{JurRys68} or \cite[pp.\,159-161]{Lint92}). Note that every (0-1)-stochastic tensor is
 an extreme point.
{(2)}.
Analytic and topological approach by using hyperplane and induction. Chang, Paksoy, and Zhang \cite{ChangPZ16} showed an upper bound (see Theorem~\ref{CPZ16} below). {(3)}. Computational geometry approach using the known results, i.e., the Lower and Upper Bound Theorems,  on polytopes.
Li, Zhang and Zhang
\cite{LZZ17}
 presented  that the upper bound obtained in this way is better (shaper) than the previous one
 in \cite{ChangPZ16}. However, the lower bound is no better.

 It has remained as an open question whether a bound (``good" or ``bad") can be obtained through optimization or linear programming. Taking this approach,
  we show new upper bounds obtained via this method. It appears to us that comparing the new bounds with the existing bounds (see Theorem~\ref{LZZ17} below) is an uneasy task; so we leave it for further investigation.

\begin{thm}
 [Chang, Paksoy, and Zhang 2016 \cite{ChangPZ16}]\label{CPZ16}
 Let $f_0(\mathcal{L}_n)$ be the number of vertices of the polytope $\mathcal{L}_n$ of
 the $n\times n\times n$ line-stochastic tensors.
Then
$$\frac{(n!)^{2n}}{n^{n^2}}\leq f_0(\mathcal{L}_n)\leq \frac{1}{n^3} \cdot {p(n)\choose n^3-1}, \;\; \mbox{where $p(n)=n^3+6n^2-6n+2$}.$$
\end{thm}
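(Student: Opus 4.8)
\bigskip
\noindent\textbf{Proof proposal.} I would split the statement into its two halves, which have quite different flavours: the lower bound is a translation into the language of Latin squares followed by a classical permanent estimate, while the upper bound is a count of basic feasible solutions of the linear system defining $\mathcal{L}_n$.

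\emph{The lower bound.} Every $n\times n\times n$ line-stochastic $(0,1)$-tensor is a vertex of $\mathcal{L}_n$: since each entry of a tensor in $\mathcal{L}_n$ lies in $[0,1]$, a $(0,1)$-tensor is already a vertex of the cube $[0,1]^{n^3}$, hence a vertex of the subpolytope $\mathcal{L}_n$. Next I would set up the standard bijection between such tensors and Latin squares of order $n$: if $A=(a_{ijk})$ is line-stochastic and $(0,1)$, the constraint $\sum_k a_{ijk}=1$ assigns to each pair $(i,j)$ a unique symbol $k=L(i,j)$, and the other two families of line constraints in (\ref{LEqs}) say exactly that each symbol occurs once per row and once per column of $L$. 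Thus $f_0(\mathcal{L}_n)\ge L(n)$, the number of Latin squares of order $n$. Finally I would bound $L(n)$ by building a Latin square one row at a time: after placing $t$ rows, the number of admissible next rows is the permanent of an $n\times n$ $(0,1)$-matrix all of whose row and column sums equal $n-t$; scaling by $1/(n-t)$ gives a doubly stochastic matrix, so by the van der Waerden bound (Egorychev--Falikman) that permanent is at least $(n-t)^n\,n!/n^n$. Multiplying these over $t=0,1,\dots,n-1$ telescopes:
$$L(n)\ \ge\ \prod_{t=0}^{n-1}\Bigl((n-t)^n\,\frac{n!}{n^n}\Bigr)\ =\ \Bigl(\prod_{t=0}^{n-1}(n-t)^n\Bigr)\frac{(n!)^n}{n^{n^2}}\ =\ (n!)^n\cdot\frac{(n!)^n}{n^{n^2}}\ =\ \frac{(n!)^{2n}}{n^{n^2}}.$$
(Alternatively, one may simply quote the Ahmed--De Loera--Hemmecke bound for $L(n)$.)

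\emph{The upper bound.} I would realize $\mathcal{L}_n$ as an $H$-polytope in $\mathbb{R}^{n^3}$: the $n^3$ inequalities $a_{ijk}\ge0$ together with the system $Lx=l$ of (\ref{Eq:L}). Deleting redundant rows of $L$ leaves a maximal linearly independent subsystem of $\rk(L)=3n^2-3n+1$ equations --- the value of $\rk(L)$ is precisely what is computed in Section~\ref{Sec:equations} (equivalently $\dim\mathcal{L}_n=n^3-\rk(L)=(n-1)^3$) --- and rewriting each such equation $\ell^{t}x=1$ as the pair $\ell^{t}x\le1$, $-\ell^{t}x\le-1$ exhibits $\mathcal{L}_n$ as the intersection of exactly
$$p(n)\ =\ n^3+2\bigl(3n^2-3n+1\bigr)\ =\ n^3+6n^2-6n+2$$
closed halfspaces. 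Every vertex of a polytope in $\mathbb{R}^{n^3}$ cut out by $p(n)$ halfspaces is the unique common point of some $n^3$ of the corresponding boundary hyperplanes with linearly independent normals, so a first, crude count over such subsystems yields $f_0(\mathcal{L}_n)\le\binom{p(n)}{n^3}$. To sharpen this to $\frac1{n^3}\binom{p(n)}{n^3-1}$ I would exploit the rigidity of the $2(3n^2-3n+1)$ equality halfspaces: they occur in antipodal pairs, so a linearly independent selection uses at most one member of each pair; every vertex automatically lies on all of those hyperplanes; and each vertex is produced by many admissible $n^3$-subsets. Carrying out this bookkeeping --- essentially the ``hyperplane and induction'' argument --- discards enough subsystems and divides out the overcount to lower the upper index of the binomial from $n^3$ to $n^3-1$ and to introduce the factor $1/n^3$.

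\emph{Where the difficulty lies.} The lower bound is routine once the Latin-square dictionary and the permanent inequality are in hand. The real work is the upper bound, and specifically the passage from the transparent estimate $\binom{p(n)}{n^3}$ to the stated $\frac1{n^3}\binom{p(n)}{n^3-1}$: this demands an honest accounting of which $n^3$-subsets of the $p(n)$ halfspaces can support a vertex and of the multiplicity with which each vertex is counted, and it is here that the inductive, hyperplane-slicing structure of the proof is essential. A prerequisite I would secure first is the identity $\rk(L)=3n^2-3n+1$, either by importing it from Section~\ref{Sec:equations} or by exhibiting $3n^2-3n+1$ independent rows of $L$ together with the $(3n-1)$-dimensional space of linear relations among its $3n^2$ rows.
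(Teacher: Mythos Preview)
The present paper does not prove Theorem~\ref{CPZ16}; it is quoted from \cite{ChangPZ16} and described only in passing as an ``analytic and topological approach by using hyperplane and induction.'' So there is no proof here to compare your proposal against directly.

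Your lower-bound argument is correct and standard: the bijection with Latin squares together with the van der Waerden/Egorychev--Falikman permanent inequality yields exactly $L(n)\ge(n!)^{2n}/n^{n^2}$, and this is the route the paper itself alludes to when it remarks that the Ahmed--De Loera--Hemmecke bound ``is immediately superiorized by the one obtained by Latin squares.''

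Your upper-bound outline, however, has a genuine gap, and in fact it is aimed at the wrong target. You correctly reach the crude estimate $\binom{p(n)}{n^3}$ and correctly observe that the $2(3n^2-3n+1)$ equality halfspaces come in antipodal pairs, all of which contain every point of $\mathcal{L}_n$. But the honest conclusion of \emph{that} bookkeeping is not $\tfrac{1}{n^3}\binom{p(n)}{n^3-1}$: any linearly independent $n^3$-selection is forced to use exactly one hyperplane from each of the $3n^2-3n+1$ pairs (these contribute no combinatorial freedom), and the remaining $(n-1)^3$ hyperplanes must come from the $n^3$ facet hyperplanes $a_{ijk}=0$, yielding at most $\binom{n^3}{(n-1)^3}=\binom{n^3}{3n^2-3n+1}$ vertices --- a bound far sharper than the one in Theorem~\ref{CPZ16} and already of the same order as the paper's own Theorems~\ref{ZZ18OR} and~\ref{ZZ18}. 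Nothing in your sketch produces the specific expression $\tfrac{1}{n^3}\binom{p(n)}{n^3-1}$; the sentence ``carrying out this bookkeeping --- essentially the `hyperplane and induction' argument'' is a pointer to an argument in \cite{ChangPZ16} that you have not reconstructed, and your own halfspace observations do not naturally lead to that particular formula.
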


\begin{thm}[Li, Zhang, and Zhang 2017 \cite{LZZ17}]\label{LZZ17}
Let $f_0(\mathcal{L}_n)$ be the number of vertices of the polytope $\mathcal{L}_n$ of
 the $n\times n\times n$ line-stochastic tensors. Then
$$L(n)\leq f_0(\mathcal{L}_n)\leq \left (\hspace{-.08in} \begin{array}{c}
n^3- \lfloor \frac{(n-1)^3+1}{2}\rfloor\\
 3n^2-3n+1
 \end{array} \hspace{-.08in} \right )+\left (\hspace{-.08in}  \begin{array}{c}
n^3- \lfloor \frac{(n-1)^3+2}{2}\rfloor\\
 3n^2-3n+1
 \end{array} \hspace{-.08in} \right  ).
$$
\end{thm}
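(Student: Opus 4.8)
The plan is to establish the two inequalities separately: the lower bound $L(n)\le f_0(\mathcal{L}_n)$ is combinatorial, counting the $(0,1)$-vertices, while the upper bound is obtained from the dual form of the McMullen Upper Bound Theorem applied to $\mathcal{L}_n$ viewed as a $d$-polytope with $m$ facets.

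\emph{Lower bound.} First note that $\mathcal{L}_n\subseteq[0,1]^{n^3}$: each entry $a_{ijk}$ is nonnegative and appears as a summand in a line-sum equal to $1$, so $0\le a_{ijk}\le1$. Hence every $(0,1)$-tensor in $\mathcal{L}_n$ is a vertex, for if such an $A$ were $rE+(1-r)F$ with $0<r<1$ and $E,F\in\mathcal{L}_n$, then in each coordinate a value in $\{0,1\}$ would be a strict convex combination of two numbers in $[0,1]$, forcing $E=F=A$. It remains to count the line-stochastic $(0,1)$-tensors. Such a tensor is determined by the map $(i,j)\mapsto$ the unique $k$ with $a_{ijk}=1$; the three families of constraints in (\ref{LEqs}) say precisely that this map is a Latin square of order $n$, and conversely every Latin square arises this way. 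Thus the number of such tensors is $L(n)$, and since each is a vertex, $L(n)\le f_0(\mathcal{L}_n)$.

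\emph{Upper bound.} I would first fix the two parameters feeding the Upper Bound Theorem. Using $\dim\mathcal{L}_n=n^3-\rk(L)$, I would compute $\rk(L)=3n^2-3n+1$, equivalently that the space of linear dependencies among the $3n^2$ line-sum equations of (\ref{Eq:L}) has dimension $3n-1$ (summing each of the three families gives the common total $n^2$, and inside each direction the per-slice sums yield the remaining relations); this gives $d:=\dim\mathcal{L}_n=n^3-(3n^2-3n+1)=(n-1)^3$. Since $\mathcal{L}_n$ sits in the affine flat $\{x:Lx=l\}$ of dimension $(n-1)^3$ and is cut out there by the $n^3$ inequalities $a_{ijk}\ge0$, every facet lies on some hyperplane $\{x_{ijk}=0\}$, so $\mathcal{L}_n$ has at most $m:=n^3$ facets. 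Now the polar $\mathcal{L}_n^{*}$ is a $d$-polytope with at most $m$ vertices whose number of facets equals $f_0(\mathcal{L}_n)$, so by the Upper Bound Theorem $f_0(\mathcal{L}_n)$ is at most the number of facets of the cyclic polytope $C_d(m)$, namely
$$f_0(\mathcal{L}_n)\le\binom{m-\lceil d/2\rceil}{\lfloor d/2\rfloor}+\binom{m-1-\lceil(d-1)/2\rceil}{\lfloor(d-1)/2\rfloor}.$$
Finally I would match this to the stated form: the identity $\binom{a}{b}=\binom{a}{a-b}$ together with $\lceil d/2\rceil+\lfloor d/2\rfloor=d$ and $\lceil(d-1)/2\rceil+\lfloor(d-1)/2\rfloor=d-1$ turns both lower entries into $m-d=3n^2-3n+1$, and the elementary identities $\lceil d/2\rceil=\lfloor(d+1)/2\rfloor$ and $1+\lceil(d-1)/2\rceil=\lfloor(d+2)/2\rfloor$ with $d=(n-1)^3$, $m=n^3$ turn the upper entries into $n^3-\lfloor\frac{(n-1)^3+1}{2}\rfloor$ and $n^3-\lfloor\frac{(n-1)^3+2}{2}\rfloor$.

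The main obstacle is the rank computation $\rk(L)=3n^2-3n+1$: one must pin down the dependency space among the $3n^2$ line-sum equations \emph{exactly}, not just bound its dimension, so that $\dim\mathcal{L}_n$ is precisely $(n-1)^3$; a clean route is to exhibit $3n^2-3n+1$ explicitly independent rows of $L$, or to invoke the classical dimension formula for $3$-index transportation polytopes with unit margins. A secondary, purely bookkeeping, point is that not every inequality $a_{ijk}\ge0$ need be facet-defining, but since the maximal vertex count of a $d$-polytope is monotone in the number of facets, bounding it by $n^3$ suffices; the remaining task is the index juggling that rewrites the cyclic-polytope formula in the stated floor form.
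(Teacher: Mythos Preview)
Your approach is correct and matches the method the paper attributes to \cite{LZZ17}: the lower bound via the Latin-square bijection with the $(0,1)$-vertices, and the upper bound via the (dual) McMullen Upper Bound Theorem applied with $d=\dim\mathcal{L}_n=(n-1)^3$ and facet count $n^3$. Note that the present paper does not itself give a proof of this theorem---it is quoted as a result of \cite{LZZ17}---but it does carry out exactly the same argument for the plane-stochastic analogue (Theorem~\ref{thm:PN}), and in Section~\ref{Sec:equations} it supplies the two ingredients you flagged as obstacles: the rank $\rk(L)=3n^2-3n+1$ (hence $\dim\mathcal{L}_n=(n-1)^3$) and the fact that $\mathcal{L}_n$ has exactly $n^3$ facets, so your monotonicity caveat is not even needed.
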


To proceed, we cite a result from linear programming (see, e.g., \cite[p.\,73]{Gass03} or
\cite[pp.\,98--108]{MarDover13}).  The vertices of a convex set are characterized as follows.

\begin{lem}\label{lem:LP}
Let $K$ be the convex set  $K=\{x\in \Bbb R^n \mid Ax=b\}$, where
$A$ is $m\times n$ and $b\in \Bbb R^m$. Then $x$ is a vertex of $K$ if and only if $x$ is a
(feasible) basic solution to $Ax=b$; that is, $x$ is a vertex of $K$ if and only if the columns of $A$ corresponding to nonzero components of $x$ are linearly independent.
\end{lem}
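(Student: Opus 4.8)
The plan is to prove the standard linear-programming characterization of vertices of the feasible region $K=\{x\in\Bbb R^n\mid Ax=b,\ x\ge 0\}$ (note that the nonnegativity constraint is implicitly present, since otherwise $K$ need not even have vertices). I will establish the equivalence by proving the contrapositive in one direction and a direct argument in the other: namely, $x$ is \emph{not} a vertex if and only if the columns of $A$ indexed by the support $S=\{i\mid x_i\neq 0\}$ are linearly \emph{dependent}.

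First I would prove the ``only if'' direction, i.e., that if the columns $\{A_i\mid i\in S\}$ are linearly dependent, then $x$ is not a vertex. From a nontrivial dependence relation $\sum_{i\in S}c_iA_i=0$ I form the vector $c\in\Bbb R^n$ supported on $S$, so that $Ac=0$. Then for sufficiently small $\varepsilon>0$ both $x^{+}=x+\varepsilon c$ and $x^{-}=x-\varepsilon c$ are nonnegative (here is where $S$ being exactly the support matters: the zero components of $x$ are untouched by $c$, and the nonzero ones remain nonnegative for small $\varepsilon$) and satisfy $Ax^{\pm}=b$. Hence $x^{\pm}\in K$, $x^{+}\neq x^{-}$, and $x=\frac12 x^{+}+\frac12 x^{-}$, so $x$ is not an extreme point.

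Next I would prove the ``if'' direction: if $x$ is not a vertex, then $\{A_i\mid i\in S\}$ is linearly dependent. Suppose $x=\frac12(u+v)$ with $u,v\in K$ and $u\neq v$. Set $c=u-v\neq 0$. Then $Ac=Au-Av=b-b=0$. Moreover, if $i\notin S$ then $x_i=0$, and since $u_i,v_i\ge 0$ with $\frac12(u_i+v_i)=0$ we get $u_i=v_i=0$, hence $c_i=0$; thus $c$ is supported on $S$. So $\sum_{i\in S}c_iA_i=Ac=0$ is a nontrivial dependence among the columns indexed by $S$, as required. Combining the two directions gives the stated equivalence, and the phrasing ``$x$ is a (feasible) basic solution to $Ax=b$'' is then just the reformulation that a point with linearly independent support-columns is precisely a basic feasible solution (one may extend any such independent set of columns to a basis).

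I do not anticipate a genuine obstacle here, as this is a textbook result; the only point requiring a little care is the role of the nonnegativity constraints, which must be invoked both to guarantee that the perturbations $x\pm\varepsilon c$ stay in $K$ (only finitely many active constraints $x_i\ge 0$ need to be respected) and to force $c_i=0$ off the support in the converse direction. I would make sure the statement of $K$ in the lemma is read with $x\ge 0$ included, and otherwise the argument is a short two-way implication as above.
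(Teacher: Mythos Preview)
Your proof is correct and is the standard textbook argument; there is nothing to fix. Note, however, that the paper does not actually prove this lemma: it is stated there as a cited result from linear programming (Gass, \emph{Linear Programming}, p.~73; Marlow, \emph{Mathematics for Operations Research}, pp.~98--108), with no proof given. So there is no ``paper's own proof'' to compare against; you have supplied what the paper merely quotes.

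Your observation that the lemma as stated silently assumes $x\ge 0$ is exactly right and is worth flagging explicitly: without the nonnegativity constraint the set $K=\{x\mid Ax=b\}$ is an affine subspace and has no vertices (unless it is a single point), so the equivalence would be vacuous or false. In the paper's context $K$ is always a polytope of nonnegative tensors, so the omission is harmless there, but your care in isolating where $x\ge 0$ is used---both to keep $x\pm\varepsilon c$ feasible and to force $c$ to vanish off the support---is precisely what makes the argument go through.
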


With this result, we present an upper bound for $f_0(\mathcal{L}_n)$.

\begin{thm}\label{ZZ18OR}
Let $f_0(\mathcal{L}_n)$ be the number of vertices of the polytope $\mathcal{L}_n$ of the
 $n\times n\times n$ line-stochastic tensors.  Then
 $$f_0(\mathcal{L}_n)\leq \sum_{k=n^2}^{3n^2-3n+1} {{n^3}\choose {k}}.$$
\end{thm}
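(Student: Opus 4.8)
The plan is to apply Lemma~\ref{lem:LP} to the linear system $Lx = l$ from \eqref{Eq:L}, counting the feasible basic solutions according to the number of nonzero coordinates. Every vertex $x$ of $\mathcal{L}_n$ is a basic feasible solution, and the columns of $L$ indexed by the support of $x$ must be linearly independent; hence the number of nonzero entries of $x$ is at most $\rk(L)$. So first I would pin down $\rk(L)$. The matrix $L$ is the $3n^2\times n^3$ incidence matrix of the three families of line-sum constraints in \eqref{LEqs}. These constraints are not independent: for each fixed $k$, summing $\sum_i a_{ijk}=1$ over $j$ and summing $\sum_j a_{ijk}=1$ over $i$ both give $\sum_{i,j} a_{ijk}=n$, and similarly there are redundancies across the three families. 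The standard count (which matches the known dimension $\dim(\mathcal{L}_n)=n^3-(3n^2-3n+1)$ for the polytope, to be established in Section~\ref{Sec:equations}) is $\rk(L)=3n^2-3n+1$. I would either cite/forward-reference that dimension computation or give the short direct argument: the $3n^2$ equations satisfy exactly $3\binom{n}{1}\cdot\text{(pairwise overlaps)} - \dots = 3n^2 - (3n^2-3n+1)$ linear dependencies; the cleanest route is to exhibit a set of $3n^2-3n+1$ linearly independent rows and show the remaining rows lie in their span. This gives the upper index $k \le 3n^2-3n+1$ in the sum.

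Next I would establish the lower index $k \ge n^2$. This is where a little more care is needed: I must show that no vertex of $\mathcal{L}_n$ has fewer than $n^2$ positive entries. For any line-stochastic tensor, consider the $n^2$ lines in a single direction, say the lines $\{a_{ijk} : 1\le i\le n\}$ indexed by $(j,k)$. Each such line sums to $1$, so each contains at least one positive entry; moreover these $n^2$ lines are pairwise disjoint as index sets and together partition all $n^3$ positions. Hence any tensor in $\mathcal{L}_n$ (vertex or not) has at least $n^2$ positive entries, so $k$ ranges over $n^2 \le k \le 3n^2-3n+1$.

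Finally, I would assemble the count. For each admissible support size $k$, the number of ways to choose which $k$ of the $n^3$ coordinates are nonzero is at most $\binom{n^3}{k}$, and for each such choice of support there is at most one basic feasible solution supported there (a basic solution is uniquely determined once its support — a linearly independent column set — is fixed, since then $x$ restricted to the support is the unique solution of a consistent full-column-rank system). Summing over $k$ from $n^2$ to $3n^2-3n+1$ yields
$$
f_0(\mathcal{L}_n)\ \le\ \sum_{k=n^2}^{3n^2-3n+1}\binom{n^3}{k},
$$
as claimed. The main obstacle is the rank computation $\rk(L)=3n^2-3n+1$: establishing the exact set of linear dependencies among the $3n^2$ line-sum equations (equivalently, that the coefficient matrix has nullity exactly $n^3-3n^2+3n-1$) requires an explicit independent subsystem, and this is precisely the content deferred to Section~\ref{Sec:equations}; everything else is a routine application of Lemma~\ref{lem:LP} together with the elementary disjoint-lines bound.
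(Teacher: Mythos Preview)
Your proposal is correct and follows essentially the same route as the paper: apply Lemma~\ref{lem:LP} to $Lx=l$, bound the support size of a vertex between $n^2$ and $\rk(L)=3n^2-3n+1$, observe that a linearly independent column set determines the vertex uniquely, and sum $\binom{n^3}{k}$ over that range. The paper treats the two support bounds as known (citing \cite{FiSw85} and the rank computation in Section~\ref{Sec:equations}), whereas you supply the short disjoint-lines argument for the lower bound and forward-reference the rank; otherwise the arguments coincide.
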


\proof
The matrix $L$ in (\ref{LEqs}) $Lx=l$ is $3n^2\times n^3$.
If $x$ is a vertex of $\mathcal{L}_n$, then the columns of $L$ corresponding to the nonzero components of $x$ are linearly independent.  Note that $x$ has at least $n^2$ positive components
 and at most $3n^2-3n+1$ positive components. We claim that for different vertices $x_1$, $x_2$, say, the corresponding sets of linearly independent columns of $L$ are different. Suppose otherwise that $M$ consists of the columns for $x_1$ and $x_2$. Then $Mx_1=l$ and $Mx_2=l$. Since $M$ is left-invertible, we have a matrix $N$ such that $NM=I$. So,
 $x_1=Nl$ and $x_2=Nl$, which result in $x_1=x_2$, a contradiction.

 Considering a vertex with $n^2$ positive components, then
  there exist $n^2$ columns of $L$ that are linearly independent. There are
 at most  ${{n^3}\choose {n^2}}$ selections of such  columns. So,  there are
 at most ${{n^3}\choose {n^2}}$ vertices with $n^2$ positive components.
 Likewise, for each $k$, $n^2\leq k \leq 3n^2-3n+1$, if there exists a vertex with
 $k$ positive components, then
 there are
 at most  ${{n^3}\choose {k}}$ selections of linearly independent  columns corresponding to the vertices with
 $k$ positive components. Therefore, there are at most  ${{n^3}\choose {k}}$ vertices with
 $k$ positive components. It follows that
 $$f_0(\mathcal{L}_n)\leq
 {{n^3}\choose {n^2}}+ \cdots + {{n^3}\choose {3n^2-3n+1}}.  \qed $$

Another characterization of extreme points of polytopes that came to our attention is the following one from game theory (see, e.g., \cite[p.\,84]{Bur59}).

\begin{lem}\label{OpLem}
Let $\P$ be a polytope in $\Bbb R^d$ given by the system of linear inequalities
$$Bx\geq b, \quad \mbox{where $B$ is $m\times d$, $b\in \Bbb R^m$,  and $x\in \Bbb R^d$}.$$
 Then
a point $x_0\in \Bbb R^d$ is an extreme point of $\P$ if and only if  $B$ has $d$ linearly independent rows
for which equalities in $Bx_0\geq b$ hold.
\end{lem}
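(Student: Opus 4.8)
The plan is to prove both implications directly from the definition of an extreme point, organizing everything around the constraints that are \emph{active} at the candidate point. Write $B_1,\dots,B_m$ for the rows of $B$, and for $x_0\in\P$ let $I=\{\,i:B_ix_0=b_i\,\}$ be the index set of active constraints at $x_0$, with $B_I$ the corresponding submatrix. The whole argument hinges on comparing $\rk(B_I)$ with $d$.

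For the ``only if'' direction I would argue by contraposition. Suppose $x_0\in\P$ with $\rk(B_I)<d$; then $\ker B_I$ contains a nonzero vector $v$. Each inactive constraint is strict, $B_jx_0>b_j$ for $j\notin I$, and there are only finitely many of them, so one can choose $\varepsilon>0$ small enough that $B_j(x_0\pm\varepsilon v)\ge b_j$ still holds for every $j\notin I$; meanwhile $B_i(x_0\pm\varepsilon v)=b_i$ for $i\in I$ because $v\in\ker B_I$. Hence $x_0+\varepsilon v$ and $x_0-\varepsilon v$ both lie in $\P$, they are distinct, and $x_0=\tfrac12(x_0+\varepsilon v)+\tfrac12(x_0-\varepsilon v)$, so $x_0$ is not extreme. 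Contrapositively, an extreme point $x_0$ must satisfy $\rk(B_I)=d$, which is exactly the statement that $B$ has $d$ linearly independent rows that are tight at $x_0$.

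For the ``if'' direction, suppose $x_0\in\P$ and there is a set $J\subseteq I$ with $|J|=d$ for which the rows $\{B_i\}_{i\in J}$ are linearly independent; then $B_J$ is an invertible $d\times d$ matrix, so $x_0$ is the unique solution of $B_Jx=b_J$. If $x_0=\lambda y+(1-\lambda)z$ with $y,z\in\P$ and $0<\lambda<1$, then for each $i\in J$ we get $b_i=B_ix_0=\lambda B_iy+(1-\lambda)B_iz$ with $B_iy\ge b_i$ and $B_iz\ge b_i$; a convex combination with positive weights of two reals each at least $b_i$ equals $b_i$ only if both equal $b_i$, so $B_Jy=b_J=B_Jz$, whence $y=z=x_0$ by uniqueness. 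Thus $x_0$ is an extreme point.

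The one step that requires care is the perturbation in the forward direction: it is essential that there are finitely many inactive constraints and each is strictly satisfied at $x_0$, so that a single $\varepsilon$ works simultaneously for all of them — the remainder is bookkeeping. I would also make explicit the tacit reading that the right-hand condition includes $x_0\in\P$ (otherwise it need not characterize extreme points), note that this forces $d\le m$, and observe that boundedness of $\P$ is never actually invoked, so the equivalence in fact holds for any polyhedron presented in the form $Bx\ge b$ that possesses vertices.
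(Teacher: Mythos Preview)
Your proof is correct and is the standard argument. However, the paper does not actually prove this lemma: it is stated with a citation to \cite[p.\,84]{Bur59} and used as a black box in the proof of Theorem~\ref{ZZ18}, so there is no ``paper's own proof'' to compare against. Your write-up therefore supplies what the paper omits. The remarks at the end are apt: the tacit hypothesis $x_0\in\P$ is indeed needed for the ``if'' direction, and nothing in the argument uses boundedness, so the characterization holds for any polyhedron $\{x:Bx\ge b\}$.
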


\begin{thm}\label{ZZ18}
Let $f_0(\mathcal{L}_n)$ be the number of vertices of the polytope $\mathcal{L}_n$ of the
 $n\times n\times n$ line-stochastic tensors. Then
 $$f_0(\mathcal{L}_n)\leq {{n^3+3n^2-3n+1}\choose {n^3}}.$$
\end{thm}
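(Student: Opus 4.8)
The plan is to apply the game-theoretic vertex criterion, Lemma~\ref{OpLem}, for which I need $\mathcal{L}_n$ written as a system $Bx\ge b$ in $\mathbb{R}^{n^3}$ in which $B$ has exactly $n^3+3n^2-3n+1$ rows. The first $n^3$ rows will be the nonnegativity inequalities $a_{ijk}\ge 0$, so the task reduces to encoding the line-stochastic equations $Lx=l$ by exactly $3n^2-3n+1$ inequalities — the number being $\operatorname{rank}(L)$, computed in Section~\ref{Sec:equations} to equal $3n^2-3n+1$.

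The key input is the combinatorial identity that each entry $a_{ijk}$ lies on exactly three lines, one in each direction, so that summing the line-sum forms over any one direction returns $\sum_{ijk}a_{ijk}$. Hence, on the nonnegative orthant, $Lx=l$ is equivalent to the one-sided system ``every direction-$1$ and direction-$2$ line sum is $\le 1$, and every direction-$3$ line sum is $\ge 1$'': the direction-$1$ inequalities force $\sum_{ijk}a_{ijk}\le n^2$, the direction-$3$ inequalities force $\sum_{ijk}a_{ijk}\ge n^2$, and then within each of the three families the $n^2$ one-sided inequalities sum to the common value $n^2$, so each of them must be an equality. This is a system of $3n^2$ one-sided inequalities whose coefficient matrix is $\pm L$, hence of rank $3n^2-3n+1$; I would then retain a linearly independent subfamily of $3n^2-3n+1$ of them, chosen so that the discarded $3n-1$ inequalities remain consequences of the retained ones together with the nonnegativity constraints. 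Appending the $n^3$ nonnegativity rows exhibits $\mathcal{L}_n=\{x\in\mathbb{R}^{n^3}:Bx\ge b\}$ with $B$ of size $(n^3+3n^2-3n+1)\times n^3$.

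Granted this representation, Lemma~\ref{OpLem} says every vertex $x_0$ is a common solution of some set of $n^3$ linearly independent rows of $B$, taken as equalities, among those active at $x_0$. Fixing one such set $J$ of $n^3$ row indices for each vertex, $x_0=(B_J)^{-1}b_J$ is determined by $J$ alone, so distinct vertices yield distinct sets $J$ — this is the invertibility argument already used in the proof of Theorem~\ref{ZZ18OR}. Consequently $f_0(\mathcal{L}_n)$ is at most the number of $n^3$-element subsets of the $n^3+3n^2-3n+1$ rows of $B$, namely $\binom{n^3+3n^2-3n+1}{n^3}$.

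The hard part will be the construction in the second paragraph: verifying that the thinned family of $3n^2-3n+1$ one-sided line-sum inequalities, together with nonnegativity, still cuts out exactly $\mathcal{L}_n$. This is delicate because discarding a one-sided line-sum inequality can destroy boundedness or the equality-forcing mechanism, so the linearly independent subfamily must be chosen so that each of the $3n-1$ discarded inequalities is recovered as a nonnegative combination of retained inequalities and nonnegativity constraints; making the bookkeeping of \emph{which} $3n-1$ inequalities to drop match the rank structure of $L$ is the crux. (If this representation proves recalcitrant, the same bound follows more cheaply by combining the vertex support-size bound $\le 3n^2-3n+1$ from Lemma~\ref{lem:LP} with Vandermonde's identity $\binom{n^3+\rho}{\rho}=\sum_{k}\binom{n^3}{k}\binom{\rho}{k}\ge\sum_{k=0}^{\rho}\binom{n^3}{k}$, with $\rho=3n^2-3n+1$.)
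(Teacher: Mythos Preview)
Your overall plan — represent $\mathcal{L}_n$ by a constraint matrix with exactly $n^3+3n^2-3n+1$ rows and invoke Lemma~\ref{OpLem}, then count vertices by the invertible $n^3\times n^3$ subsystems $B_J$ — is exactly the paper's route, and your final uniqueness argument matches the paper verbatim. The step you flag as ``the crux'' is precisely the one the paper glosses over: the paper simply asserts that the $3n^2-3n+1$ independent line-sum \emph{equations} together with the $n^3$ nonnegativity \emph{inequalities} give $\mathcal{L}_n=\{x:Bx\ge b\}$ with $B$ of size $(n^3+3n^2-3n+1)\times n^3$, without explaining how equations become single one-sided inequalities.

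Your one-sided line-sum construction is clever and your $3n^2$-inequality system does cut out $\mathcal{L}_n$, but the thinning to $3n^2-3n+1$ rows is, as you suspect, genuinely delicate: a linear dependence among the rows of $L$ does not by itself make a one-sided inequality redundant when the signs are mixed, so the bookkeeping would have to be done by hand. The clean resolution — and almost certainly what the paper has in mind — is not to force a pure $Bx\ge b$ description at all, but to use the standard extension of Lemma~\ref{OpLem} to mixed systems $\{Ax=b,\;Cx\ge d\}$: a feasible $x_0$ is a vertex iff the rows of $A$ together with the rows of $C$ active at $x_0$ contain $d$ linearly independent rows. Taking $A$ to be a full-rank $(3n^2-3n+1)\times n^3$ submatrix of $L$ (always active) and $C=I_{n^3}$, every vertex is pinned down by some $n^3$ linearly independent rows drawn from a pool of $n^3+3n^2-3n+1$, and the bound follows. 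Your Vandermonde fallback from Theorem~\ref{ZZ18OR} is also correct and is arguably the tidiest self-contained argument.
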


\begin{proof}
The polytope $\mathcal{L}_n\subset \Bbb R^{n^3}$ is defined by the linear inequalities
(\ref{LEq}) and equalities (\ref{LEqs}). There are $3n^2$ equations in (\ref{LEqs}). We can reduce
these equations to fewer equivalent equations . Of the $n^3$ variables $a_{ijk}$, $(n-1)^3$ of them, say $a_{ijk}$, $1\leq i, j, k\leq n-1$,  are free, the rest
$a_{ijk}$'s with an index $n$ are dependable. It turns out there are
$3n^2-3n+1$ equations (see, e.g., \cite[p.\,182]{FiSw85}). With the $n^3$ inequalities
$a_{ijk}\geq 0$, we  can write the polytope $\mathcal{L}_n$ in the form
$\{x\mid Bx \geq b\}$, where $B$ is an $(n^3+3n^2-3n+1)\times n^3$ matrix.

Set
  $d=n^3$ in Lemma~\ref{OpLem}.  If $x_0$ is an extreme point of $\mathcal{L}_n$, then
  $B_0x_0=b_0$, where $B_0$ consists of $d$ linearly independent rows of $B$ and $b_0$ is the vector of the components of $b$ corresponding to these rows of $B_0$. It follows that
  $B_0$ is a square matrix and it is nonsingular. Thus $x_0$ is uniquely determined by
  $B_0$ and $b_0$.  It turns out there are at most ${{n^3+3n^2-3n+1}\choose {n^3}}$ extreme points for $\mathcal{L}_n$. \qed
\end{proof}




We point out that Linial and Luria presented an estimate of $f_0(\mathcal{L}_n)$ in \cite{LL14}, showing that the polytope $\mathcal{L}_n$ 
 has at least as many vertices  as
\begin{equation*}\label{LL}
 \Big ( \big (1+o(1)\big )\frac{n}{e^2}\Big )^{n^2\cdot \big (\frac32-o(1)\big )}.
\end{equation*}

\section{Dimensions of the polytopes and matrix ranks}\label{Sec:equations}
The definition of a polytope comes in a few different but equivalent forms, one of which is that it is a bounded set enclosed by closed halfspaces. Such a definition is intuitive and geometric. In this section, we discuss the polytopes $\mathcal{L}_n$ (triply line-stochastic tensors) and $\mathcal{P}_n$ (triply plane-stochastic tensors), writing them as intersections of closed halfspaces, determining the dimensions of the polytopes, and finding the ranks of the corresponding coefficient matrices $L$ and $P$.


Let $\mathcal{H}$ denote a generic hyperplane $\{x : \ell(x)=c\}$, where $\ell(x)$ is a linear form in $x$ and $c$ is a constant. Let
$\mathcal{H}^+=\{x : \ell(x)\geq c\}$ and $\mathcal{H}^-=\{x : \ell(x)\leq c\}$. Then $\mathcal{H}^+$ and $\mathcal{H}^-$ are closed halfspaces.
Define  the hyperplanes:
$$\mathcal{A}_{ijk}: \;\; a_{ijk}= 0\;\; \mbox{(e.g., $\mathcal{A}_{123}$ for $a_{123}=0$)}$$
\vspace{-.1in}
$$\mathcal{H}_{ \cdot jk}: \quad
\sum_{i=1}^n a_{ijk}=1, \quad \forall j, k,$$
\vspace{-.1in}
$$\mathcal{H}_{ i\cdot k}: \quad
\sum_{j=1}^n a_{ijk}=1, \quad \forall i, k,$$
\vspace{-.1in}
$$\mathcal{H}_{ ij\cdot}: \quad
 \sum_{k=1}^n a_{ijk}=1, \quad \forall i, j.
$$
We have
  $$\mathcal{L}_n=\cap_{ijk}\mathcal{A}^+_{ijk}\cap \H_{ \cdot jk}^+\cap \H_{ \cdot jk}^-
  \cap \H_{ i\cdot k}^+\cap \H_{ i\cdot k}^-\cap \H_{ ij\cdot}^+\cap \H_{ ij\cdot}^- $$
 Thus,
 $\mathcal{L}_n$ is a polytope (in $\Bbb R^{n^3}$) enclosed by closed halfspaces. Of
  the $n^3$ variables in equations (\ref{LEqs}), $3n^2-3n+1$ are not free (dependent of others)
  due to the constraints.  We see that the dimension of the polytope $\mathcal{L}_n$ is $\dim (\mathcal{L}_n)=n^3-3n^2+3n-1=(n-1)^3$. It follows that the rank of the
   coefficient matrix $L$ in (\ref{Eq:L}) is $3n^2-3n+1$.
The  polytope $\mathcal{L}_n$  has $n^3$ facets because every
 $\mathcal{A}_{ijk}$ is a supporting hyperplane due to the fact that $\mathcal{L}_n\subseteq \mathcal{A}_{ijk}^+$ and
 $ \mathcal{A}_{ijk}\cap \mathcal{L}_n\not = \emptyset$.
 Note that
  $\dim (\mathcal{A}_{ijk}\cap \mathcal{L}_n)= \dim (\mathcal{L}_n)-1$ since one position (index or axis)  is set to 0.
 In contrast, finding the number of vertices (i.e., faces of dimension 0) for
 $\mathcal{L}_n$ is a very difficult open question. We put these together as a theorem.

\begin{thm} The dimension of the polytope $\mathcal{L}_n$ is $(n-1)^3$.
Every $n\times n\times n$ line-stochastic tensor can be expressed as a convex combination of at most $n^3-3n^2+3n$ vertices of $\mathcal{L}_n$. The rank of the matrix $L$ in equation {\rm (\ref{Eq:L})} is $3n^2-3n+1$.
\end{thm}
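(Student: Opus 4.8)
The plan is to establish the three claims in order: the dimension $\dim(\mathcal{L}_n)=(n-1)^3$, the Carath\'eodory-type bound $n^3-3n^2+3n$, and the rank $\rk(L)=3n^2-3n+1$. The central fact from the general theory recalled in the introduction is the identity $\dim(\mathcal{P})=d-\rk(A)$ for a polytope in $\Bbb R^d$ defined by $Ax\le b$; here $d=n^3$ and the affine hull of $\mathcal{L}_n$ is cut out by the equality system $Lx=l$, so $\dim(\mathcal{L}_n)=n^3-\rk(L)$. Hence the dimension claim and the rank claim are equivalent, and it suffices to prove one of them; I would prove $\rk(L)=3n^2-3n+1$ directly. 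The Carath\'eodory bound is then automatic: any point of a polytope of dimension $m$ is a convex combination of at most $m+1$ of its vertices, and $m+1=(n-1)^3+1=n^3-3n^2+3n$.

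For the rank computation I would argue both inequalities. For $\rk(L)\le 3n^2-3n+1$: the $3n^2$ rows of $L$ are the indicator vectors of the three families of lines. Summing the $n^2$ row-sum equations $\sum_i a_{ijk}=1$ over all $j$ (for each fixed $k$) gives $\sum_{i,j}a_{ijk}=n$ for each $k$; doing the analogous summation for the second family gives the same $n$ relations, so the two families together contribute at most one ``global'' coincidence per pair — a cleaner way is to exhibit $3n-1$ explicit linear dependencies among the $3n^2$ rows and check no more exist. Concretely, for each fixed $k$, $\sum_j \mathbf{1}_{\H_{\cdot jk}}=\sum_i \mathbf{1}_{\H_{i\cdot k}}$ as vectors in $\Bbb R^{n^3}$ (both equal the indicator of the $k$-th page), giving $n$ relations; similarly fixing $i$ gives a relation between the second and third families, and fixing $j$ between the first and third; these overlap so that the rank of the dependency space is exactly $3n-1$, yielding $\rk(L)\le 3n^2-(3n-1)=3n^2-3n+1$. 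For the reverse inequality $\rk(L)\ge 3n^2-3n+1$, I would either cite \cite[p.\,182]{FiSw85} as the excerpt already does, or exhibit a nonsingular $(3n^2-3n+1)\times(3n^2-3n+1)$ submatrix of $L$ obtained by selecting $3n^2-3n+1$ columns (e.g.\ those $a_{ijk}$ for which a line-permutation tensor can be freely supported) — equivalently, produce a vertex of $\mathcal{L}_n$ with exactly $3n^2-3n+1$ positive entries whose columns are linearly independent, which exists by the known bound on the support size of extreme tensors quoted earlier.

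For the facet count (stated in the surrounding text but worth pinning down), each hyperplane $\mathcal{A}_{ijk}:a_{ijk}=0$ supports $\mathcal{L}_n$ because $\mathcal{L}_n\subseteq\mathcal{A}_{ijk}^+$ and there is a line-stochastic tensor with a zero in position $(i,j,k)$ (for $n\ge2$), and $\dim(\mathcal{A}_{ijk}\cap\mathcal{L}_n)=\dim(\mathcal{L}_n)-1$ since the face is itself an affine slice of codimension one; distinct $(i,j,k)$ give distinct facets, so there are exactly $n^3$ of them. The equality hyperplanes $\H_{\cdot jk},\H_{i\cdot k},\H_{ij\cdot}$ contain all of $\mathcal{L}_n$ and so do not produce facets.

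The main obstacle is the lower bound $\rk(L)\ge 3n^2-3n+1$: the upper bound and the dependency structure are mechanical, but showing that there are \emph{no further} dependencies requires either invoking the cited reference or doing a genuine (if short) combinatorial argument — e.g.\ verifying that the $3n-1$ relations above span the entire left null space of $L$, which amounts to checking that a vector orthogonal to all columns of $L$ (i.e.\ constant on each line) is forced to lie in that $(3n-1)$-dimensional span. I expect to handle this by the slice-indicator identities above together with a short rank count, mirroring \cite{FiSw85}.
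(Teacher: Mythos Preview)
Your proposal is correct and follows essentially the same route as the paper: the paper's argument (given in the paragraph preceding the theorem) counts the free variables $a_{ijk}$ with $1\le i,j,k\le n-1$, cites \cite[p.\,182]{FiSw85} for the fact that the $3n^2$ constraints reduce to $3n^2-3n+1$ independent ones, and then reads off $\dim(\mathcal{L}_n)=(n-1)^3$, $\rk(L)=3n^2-3n+1$, and the Carath\'eodory bound. Your version reverses the order (rank first, then dimension) and adds an explicit description of the $3n-1$ row dependencies that the paper leaves implicit, but the substance---including the reliance on \cite{FiSw85} for the lower bound on the rank---is the same.
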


In a similar way one can write the polytope  $\mathcal{P}_n$ of the triply plane-stochastic
tensors as a finite intersection of closed halfspaces via hyperplanes.
The dimension of $\mathcal{P}_n$ is
$n^3-3n+2$.
Since $\mathcal{P}_n\subseteq \mathcal{A}_{ijk}^+$,
 $\mathcal{F}_{ijk}:=\mathcal{A}_{ijk}\cap \mathcal{P}_n\not = \emptyset$, and
  $\dim \mathcal{F}_{ijk} = \dim (\mathcal{P}_n)-1$,
   $\mathcal{F}_{ijk}$ are the facets of $\mathcal{P}_n$ and $\mathcal{P}_n$  has $n^3$ facets.

\begin{thm} The dimension of the polytope $\mathcal{P}_n$ is $n^3-3n+2$.
Every $n\times n\times n$ plane-stochastic tensor can be expressed as a convex combination of at most $n^3-3n+3$ vertices of $\mathcal{P}_n$. The rank of the matrix $P$ in equation {\rm (\ref{Eq:P})} is $3n-2$.
\end{thm}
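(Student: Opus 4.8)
The plan is to follow the template already used for $\mathcal{L}_n$. First I would write $\mathcal{P}_n$ as a bounded intersection of closed halfspaces: the $n^3$ nonnegativity hyperplanes $\mathcal{A}_{ijk}\colon a_{ijk}=0$ together with the three families of plane-sum hyperplanes coming from (\ref{PEqs}). This step is routine and exactly parallel to the $\mathcal{L}_n$ discussion; it identifies $\mathcal{P}_n$ with $\{x\in\mathbb{R}^{n^3}\colon Px=p,\ x\ge 0\}$, a bounded set (since $0\le a_{ijk}\le 1$) enclosed by the $n^3$ halfspaces $\mathcal{A}_{ijk}^{+}$. The substance is then the claim $\rk(P)=3n-2$, from which $\dim(\mathcal{P}_n)=n^3-\rk(P)=n^3-3n+2$ follows once full-dimensionality inside the affine flat $\{x\colon Px=p\}$ is checked.

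To compute $\rk(P)$, I would analyze the left null space of $P$ directly. Index the $3n$ rows of $P$ by $R_k$ (the equation $\sum_{i,j}a_{ijk}=1$, for $k=1,\dots,n$), $S_i$ ($\sum_{j,k}a_{ijk}=1$), and $T_j$ ($\sum_{i,k}a_{ijk}=1$). A coefficient vector $(\lambda_k,\mu_i,\nu_j)$ annihilates the rows of $P$ exactly when the linear form $\sum_k\lambda_k R_k+\sum_i\mu_i S_i+\sum_j\nu_j T_j$ vanishes identically on $\mathbb{R}^{n^3}$; since the coefficient of $a_{ijk}$ in this form is $\lambda_k+\mu_i+\nu_j$, the condition is $\lambda_k+\mu_i+\nu_j=0$ for all $i,j,k$. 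Comparing two such relations that differ in a single index forces all the $\lambda_k$ to be equal, all the $\mu_i$ to be equal, and all the $\nu_j$ to be equal, leaving only the single constraint $\lambda+\mu+\nu=0$. Hence the left null space of $P$ is $2$-dimensional and $\rk(P)=3n-2$. (The inequality $\rk(P)\le 3n-2$ is already visible from the two dependencies $\sum_k R_k=\sum_i S_i=\sum_j T_j$, each equal to the grand-sum form $\sum_{i,j,k}a_{ijk}$; the point of the argument above is that there are no further dependencies.)

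Next I would establish $\dim(\mathcal{P}_n)=n^3-3n+2$. Since the identity $\dim(\mathcal{P}_n)=n^3-\rk(P)$ quoted in the introduction requires $\mathcal{P}_n$ to be full-dimensional within the flat $\{x\colon Px=p\}$, I would exhibit the uniform tensor $a_{ijk}=1/n^{2}$: it satisfies every plane equation and has all $n^3$ coordinates strictly positive, so a sufficiently small ball around it inside $\{Px=p\}$ still consists of nonnegative tensors and therefore lies in $\mathcal{P}_n$. This produces a relative interior point, hence the asserted dimension, and also shows $\mathcal{P}_n$ is nonempty. Finally, applying Carath\'eodory's theorem inside the polytope $\mathcal{P}_n$ of dimension $n^3-3n+2$ shows that every $n\times n\times n$ plane-stochastic tensor is a convex combination of at most $(n^3-3n+2)+1=n^3-3n+3$ vertices of $\mathcal{P}_n$.

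The one step that genuinely needs care is the claim that the left null space of $P$ is exactly $2$-dimensional, not merely at least $2$-dimensional; the coefficient-matching argument handles it, but one should make sure the ``differ in one index'' comparisons are available, which they are since all index ranges are nonempty. Everything else is a faithful transcription of the $\mathcal{L}_n$ argument with plane-sum hyperplanes replacing line-sum hyperplanes, together with a standard application of Carath\'eodory.
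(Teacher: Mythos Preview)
Your proposal is correct. It follows the same outline the paper suggests (``In a similar way one can write the polytope $\mathcal{P}_n$ \ldots''), but you carry out the rank computation in the opposite direction and with more care than the paper does. The paper, by analogy with its treatment of $\mathcal{L}_n$, counts free versus dependent variables among the $a_{ijk}$ to read off the dimension and then infers the rank of $P$; you instead compute $\rk(P)$ directly by determining the left null space of $P$ (the coefficient-matching argument $\lambda_k+\mu_i+\nu_j=0$ is clean and complete), and you then supply the interior point $a_{ijk}=1/n^2$ to certify that $\mathcal{P}_n$ is full-dimensional inside $\{Px=p\}$. That last step is something the paper takes for granted, so your argument is actually more self-contained. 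The Carath\'eodory application is identical in spirit to the paper's remark for $\mathcal{L}_n$.
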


\section{New bounds}\label{Sec:NewBounds}
In their seminal paper \cite{{JurRys68}} on configurations
and decompositions of multidimensional arrays, Jurkat and Ryser demonstrated a bijection  between the Latin squares of order $n$ and the  (0,1)-permutation tensors of 3rd order and dimension $n$
(see also, e.g., \cite[p.\,159]{Lint92}).
Since every (0,1)-line-permutation tensor is an extreme point of polytope $\mathcal{L}_n$,
it follows immediately that $f_0(\mathcal{L}_n)$ is bounded below by $L(n)$, the number of Latin squares.  This section is to present a new
lower bound for $f_0(\mathcal{L}_n)$ and also to give lower and upper bounds for
$f_0(\mathcal{P}_n)$.

\begin{thm} Let $L(n)$ be the number of Latin squares of order $n$ and let
$f_0(\mathcal{L}_n)$ be the number of vertices of $\mathcal{L}_n$ (the triply line-stochastic tensors). Then
$$(5n^3-9n^2-2n+3)+ L(n)\leq f_0(\mathcal{L}_n).
$$
\end{thm}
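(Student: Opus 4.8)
The plan is to bound $f_0(\mathcal{L}_n)$ from below by displaying two disjoint families of vertices of $\mathcal{L}_n$: first, the line-permutation tensors (equivalently, the $(0,1)$-line-stochastic tensors), each of which is extreme, and which number $L(n)$ by the Jurkat--Ryser bijection with Latin squares recalled above; second, a family $\mathcal{V}$ of non-$(0,1)$ vertices with $|\mathcal{V}|=5n^3-9n^2-2n+3$. Since every member of $\mathcal{V}$ has an entry equal to $\frac12$, it coincides with no $(0,1)$-vertex, the two families are disjoint, and hence $f_0(\mathcal{L}_n)\ge L(n)+|\mathcal{V}|$, which is the claim. So it remains to construct $\mathcal{V}$, to verify that each of its members is in fact an extreme point of $\mathcal{L}_n$, and that they are pairwise distinct.

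To build $\mathcal{V}$ I would fix a line-permutation tensor $T_0$ --- concretely the ``cyclic'' one, $T_0[i,j,k]=1$ iff $i+j+k\equiv0\pmod n$ --- and note that its support is a combinatorial transversal: no two support positions agree in two of the three coordinates. I would then alter $T_0$ only on a bounded local region, leaving the rest of the transversal untouched, each alteration replacing a bounded cluster of support positions by a fractional configuration that has the same line-marginals on every line it meets; a natural prototype is a non-$(0,1)$ vertex of $\mathcal{L}_3$ of the kind recorded in Section~\ref{Sec:Preli}, transplanted into the $n\times n\times n$ array, together with possibly a few further bounded prototypes. An alteration is anchored at a position (equivalently, at a choice of a bounded number of indices in $[n]$). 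Every resulting tensor lies in $\mathcal{L}_n$: nonnegativity is clear, the transplanted pattern having entries in $\{0,\frac12,1\}$ and the untouched part being $0/1$; and all line sums stay equal to $1$ by the marginal-preserving choice of pattern. Counting the anchors that yield a legitimate placement --- discarding those that overrun the array boundary or collapse the tensor back to a $(0,1)$-tensor, and identifying anchors producing the same tensor --- is intended to give exactly $5n^3-9n^2-2n+3$ tensors.

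Extremality I would establish through Lemma~\ref{lem:LP}: a tensor $S$ is a vertex of $\mathcal{L}_n$ precisely when the columns of the coefficient matrix $L$ in $(\ref{Eq:L})$ indexed by $\mathrm{supp}(S)$ are linearly independent. This splits into a global and a local part. For all but boundedly many pairs $(i,j)$, the row of $L$ belonging to the equation $\sum_{k=1}^{n}a_{ijk}=1$ meets $\mathrm{supp}(S)$ in a single column, which forces the corresponding coefficient in any linear dependence to be $0$; this is exactly the computation showing a line-permutation tensor is a vertex, and it annihilates all coefficients except those attached to the bounded altered cluster. What remains is the independence of a bounded collection of columns, a finite verification not depending on $n$, to be carried out once for each prototype. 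Pairwise distinctness of the members of $\mathcal{V}$ follows by reading the anchor --- hence the alteration --- off the $\frac12$-pattern and support of a given $S$, and disjointness from the $L(n)$ integer vertices is automatic.

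The step that needs genuine care rather than routine verification is the combinatorial accounting in the second paragraph: fixing the repertoire of local prototypes, deciding for which anchors a prototype can be grafted onto $T_0$ without destroying the global transversal or producing a $(0,1)$-tensor, and ruling out coincidences between distinct admissible anchors. It is precisely the interaction of these three reductions --- boundary truncation, collapse to $(0,1)$, and identification --- that turns the naive count of order $5n^3$ into $5n^3-9n^2-2n+3$, and getting every lower-order term exactly right is the delicate point. As a check, for $n=3$ the asserted bound is $51+L(3)=51+12=63\le 66=f_0(\mathcal{L}_3)$, so the construction recovers $51$ of the $54$ known non-$(0,1)$ vertices of $\mathcal{L}_3$.
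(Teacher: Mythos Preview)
Your strategy is not the paper's, and the central step is not carried out.  The paper does \emph{not} build non-$(0,1)$ vertices by local surgery on a single fixed line-permutation $T_0$.  Instead it invokes five specific extreme points $E_1,\dots,E_5$ of $\mathcal{L}_n$ constructed by Fischer and Swart, each with a known number of zero entries (respectively $n^3-2n^2$, $n^3-2n^2+n$, $n^3-n^2-3n+1$, $n^3-n^2-3n+3$, and at least $(n-1)^3$), and then uses Proposition~1 (invariance of extremality under the $S_n\times S_n\times S_n$ action on indices) to move a distinguished entry of each $E_i$ into each of its zero positions, producing that many distinct extreme points of the same type.  The five types are distinguishable by support size or by the presence of a $\tfrac12$ versus a $\tfrac{1}{n-1}$ entry, so they do not overlap, and none of them is a $(0,1)$-tensor.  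The polynomial $5n^3-9n^2-2n+3$ is literally the sum of those five zero counts; it is not the output of an anchor-counting argument at all.

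In your proposal the work that actually proves the theorem is missing.  You never name the ``bounded prototypes'', never verify extremality for any of them, and --- most importantly --- never perform the enumeration that is supposed to produce $5n^3-9n^2-2n+3$.  You acknowledge this as ``the delicate point'', but that is precisely the content of the theorem: without a concrete list of prototypes and a computed anchor count there is no reason the number you would obtain matches the one in the statement (which, as explained above, arises from five externally supplied Fischer--Swart constructions, not from grafting $\mathcal{L}_3$-type patterns onto a cyclic transversal).  Your scheme may well yield \emph{some} bound of the shape $c\,n^3+O(n^2)+L(n)$, but nothing in the write-up pins down $c=5$ or the lower-order terms.  The $n=3$ sanity check is only a consistency test of the inequality, not evidence that your construction realises $51$ vertices.
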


\begin{proof}
Let $E$ be a vertex of $\mathcal{L}_n$ having $k$ zero entries.
It is known that $k> n^3-3n^2+3n-1=(n-1)^3$ (see, e.g., \cite{FiSw85}). Let
$e_{\max}$ be the largest nonzero entry of $E$. By the discussions in Section~\ref{Sec:Preli},
we can place $e_{\max}$  in any position of the $k$ zeros through transformations (or permutations).  In this way, we can generate $k$ extreme points from $E$.

If $n=1, 2, 3$, we can easily verify the inequalities. For $n=4$, there are 576 Latin squares, i.e., $L(4)=576$, while $f_0(\mathcal{L}_4)$ is much bigger (see, e.g., \cite{KLX16}).

Assume $n\geq 5$.
Collecting the extreme points constructed by Fischer and Swart \cite{FiSw85}, we have the following types of extreme points for $\mathcal{L}_n$:
\begin{itemize}
\item[$E_{1}$]: There are $2n^2$ nonzero entries $\frac12$; other entries are 0. ($n^3-2n^2$ 0's.) 
\item[$E_{2}$]: There are $2n^2-n$ nonzero entries; $e_{\max}=1$. ($n^3-2n^2+n$ 0's.)  
\item[$E_{3}$]: There are $n^2+3n-1$ nonzero entries; $e_{\max}=1$. ($n^3-n^2-3n+1$ 0's.)  
\item[$E_{4}$]: There are $n^2+3n-3$ nonzero entries; $e_{\max}=1$. ($n^3-n^2-3n+3$ 0's.)  
\item[$E_{5}$]: There exists a minimal positive entry $\frac{1}{n-1}$. (At least $(n-1)^3$ 0's.)  
\end{itemize}

For $E_1$ type extreme points,  we can place $\frac12$ in any of those  $n^3-2n^2$ positions with 0 to generate (at least) $n^3-2n^2$  additional extreme points.
For $E_2$ type extreme points,  we   place $e_{\max}=1$ in any of those  $n^3-2n^2+n$ positions with 0 to generate (at least) $n^3-2n^2+n$  additional extreme points. Do the same thing for $E_3$ and $E_4$.
For $E_5$ type extreme points,  we  place $\frac{1}{n-1}$ in any of those  $(n-1)^3$ positions with 0 to generate (at least) $(n-1)^3$  additional extreme points.
Note that none of the above extreme points is a (0,1)-tensor. By adding, we get
$$ (5n^3-9n^2-2n+3)+L(n)\leq f_0(\mathcal{L}_n). \quad \qed $$
\end{proof}

It is tempting to apply the same idea of placing other nonzero entries in the zero positions to generate more extreme points. We point out, however, this may not work in general because placing the largest entry affects the positions of other entries. For instance, the extreme point obtained by placing the smallest nonzero entry in a zero position may have been already obtained by placing the largest entry in some zero position.

Now we turn our attention to the polytope of plane-stochastic tensors. Recall the famous McMullen Upper Bound Theorem (UBT) \cite{McM70} (see also, e.g., \cite[p.\,90]{Bro83}):
 the number $f_0(\mathcal{P})$ of vertices of a  polytope $\mathcal{P}$ of dimension $d$ with $f_{d-1}$ facets is bounded as follows:
\begin{equation*}\label{upper}
f_0(\mathcal{P)}\leq  { {f_{d-1} - \lfloor \frac {d+1} {2} \rfloor } \choose { f_{d-1}-d}}  +  {{f_{d-1} - \lfloor \frac {d+2} {2} \rfloor } \choose { f_{d-1}-d} }.
\end{equation*}

\begin{thm}\label{thm:PN}
 Let $\mathcal{P}_n$ be the polytope of triply plane-stochastic tensors. Then
$$(n!)^{2}\leq f_0(\mathcal{P}_n)\leq
\left (\hspace{-.08in} \begin{array}{c}
  \frac{n^3+3n-2}{2}\\
 3n-2
 \end{array} \hspace{-.08in} \right )+\left (\hspace{-.08in}  \begin{array}{c}
 \frac{n^3+3n-4}{2}\\
 3n-2
 \end{array} \hspace{-.08in} \right  ).
$$
\end{thm}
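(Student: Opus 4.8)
The plan is to establish the two inequalities separately, each by a natural specialization of tools already developed in the paper.

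\textbf{Lower bound.} First I would show $(n!)^2 \le f_0(\mathcal{P}_n)$ by exhibiting $(n!)^2$ distinct plane-permutation tensors, i.e.\ $(0,1)$-plane-stochastic tensors, since every such tensor is a vertex of $\mathcal{P}_n$. The idea is to build these from a Latin-square-style construction adapted to the plane constraints: a plane-permutation tensor has exactly one $1$ in each of the $3n$ planes, which forces exactly $n$ ones total, one in each $i$-plane, one in each $j$-plane, one in each $k$-plane; thus it is determined by placing ones at positions $(i,\sigma(i),\tau(i))$ for a pair of permutations $\sigma,\tau \in S_n$. One checks each such placement is genuinely plane-stochastic and that distinct pairs $(\sigma,\tau)$ give distinct tensors, which yields $(n!)^2$ vertices. (Equivalently, these are exactly the rank-one-like "diagonal'' tensors $e_i \otimes e_{\sigma(i)} \otimes e_{\tau(i)}$ summed over $i$.) This is the easy half.

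\textbf{Upper bound.} For the upper bound I would invoke the McMullen Upper Bound Theorem recalled just above the statement, applied to $\mathcal{P} = \mathcal{P}_n$. The paper has already computed the two parameters this needs: by the theorem preceding this section, $\dim(\mathcal{P}_n) = n^3 - 3n + 2$, so set $d = n^3 - 3n + 2$; and $\mathcal{P}_n$ has $f_{d-1} = n^3$ facets (the facets $\mathcal{F}_{ijk} = \mathcal{A}_{ijk}\cap\mathcal{P}_n$). Substituting $f_{d-1} = n^3$ and $d = n^3 - 3n + 2$ into the UBT bound $\binom{f_{d-1} - \lfloor (d+1)/2\rfloor}{f_{d-1}-d} + \binom{f_{d-1} - \lfloor (d+2)/2\rfloor}{f_{d-1}-d}$, we get $f_{d-1} - d = 3n - 2$ in both lower entries, and the upper entries become $n^3 - \lfloor (n^3-3n+3)/2 \rfloor$ and $n^3 - \lfloor (n^3-3n+4)/2 \rfloor$. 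The remaining task is the routine arithmetic simplification of these floor expressions into $\frac{n^3+3n-2}{2}$ and $\frac{n^3+3n-4}{2}$ respectively. I would handle the parity: when $n$ is odd, $n^3 - 3n$ is even, so $n^3-3n+3$ is odd and $\lfloor(n^3-3n+3)/2\rfloor = (n^3-3n+2)/2$, giving upper entry $n^3 - (n^3-3n+2)/2 = (n^3+3n-2)/2$; similarly $\lfloor(n^3-3n+4)/2\rfloor = (n^3-3n+4)/2$ gives $(n^3+3n-4)/2$. When $n$ is even, $n^3-3n$ is odd, so one rechecks the floors land on the same values; in either case the two binomial coefficients displayed in the theorem result, so the UBT bound literally is the claimed quantity.

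\textbf{Main obstacle.} The genuinely delicate point is making sure McMullen's UBT is applicable: UBT in its classical form bounds $f_0$ of a $d$-polytope in terms of its facet count, and this bound is attained by cyclic polytopes, so no structural hypothesis beyond dimension and facet count is needed — but one should note that $\mathcal{P}_n$ is a full-dimensional polytope only after passing to the affine hull, where $d = n^3 - 3n + 2$ is indeed its intrinsic dimension, and that each $\mathcal{A}_{ijk}$ really does cut out a distinct facet of that affine-hull polytope (already argued in the previous section via $\dim\mathcal{F}_{ijk} = \dim\mathcal{P}_n - 1$). Everything else is the parity bookkeeping above. So the write-up is: (i) diagonal tensors give $(n!)^2$ vertices; (ii) plug $d = n^3-3n+2$, $f_{d-1} = n^3$ into UBT and simplify. \qed
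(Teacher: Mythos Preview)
Your approach is essentially identical to the paper's: count the $(0,1)$-plane-stochastic tensors for the lower bound (the paper phrases the count as $(n^2)(n-1)^2\cdots 2^2\cdot 1^2$, which is the same enumeration as your $(\sigma,\tau)\in S_n\times S_n$), and plug $d=n^3-3n+2$, $f_{d-1}=n^3$ into McMullen's UBT for the upper bound. One small slip: $n^3-3n$ is in fact even for \emph{all} $n$ (since $n^3$ and $n$ have the same parity), so your case split is unnecessary and the ``$n$ even'' parity claim is wrong --- but this is harmless, as the odd-$n$ computation you wrote out already covers every $n$.
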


\proof  The left-hand side inequality is obtained by observing that every (0,1)-plane-stochastic tensor is an extreme point of $\mathcal{P}_n$. There are
$(n^2)(n-1)^2\cdots 2^2=(n!)^2$ such  tensors.
The
 inequality on the right-hand side is due to the McMullen UBT with $d=n^3-3n+2$ and
$f_{d-1}=n^3.$  \qed

Using Lemma \ref{lem:LP}, we can obtain an analogous result of
Theorem \ref{ZZ18OR} for $\mathcal{P}_n$:
\begin{equation}\label{eq:UpperP}
f_0(\mathcal{P}_n)\leq \sum_{k=n}^{3n-2} {n^3\choose k}.
\end{equation}

  \section{Line and plane extreme stochastic tensors}\label{Sec:LP}

  Consider the tensors ${A}$ in $\mathcal{L}_n$. Since $A\in \mathcal{L}_n$ is triply line-stochastic, we see that $\frac{1}{n} A$ is triply plane-stochastic, i.e., $\frac{1}{n} A\in \mathcal{P}_n$ . It is known that every (0,1)-permutation tensor in $\mathcal{L}_n$ is an extreme point of $\mathcal{L}_n$. A natural question would be:
can one obtain an extreme point of $\mathcal{P}_n$ by
\lq\lq scaling" the (0,1)-permutation tensors in $\mathcal{L}_n$? The answer is negative if $n\ge 3$. For $n=2$, let $A=(a_{ijk})$ with
$$a_{111}=1,\, a_{121}=0,\, a_{211}=0, a_{221}=1, \,
a_{112}=0, \,a_{122}=1, \,a_{212}=1, \,a_{222}=0.$$
Then $A$ is one of the two vertices of $\mathcal{L}_2$; $\frac12 A$ is one of the six
vertices of $\mathcal{P}_2$.

  \begin{pro}\label{Pro12} Let $n\ge 3$.
   If $A$ is an extreme point of $\mathcal{L}_n$ with (0,1)-entries, then
  $\frac{1}{n} A$ is not an extreme point of $\mathcal{P}_n$.
  \end{pro}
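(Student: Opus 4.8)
The plan is to show that $\frac{1}{n}A$ fails the vertex characterization of Lemma~\ref{lem:LP}: the columns of $P$ (the $3n\times n^3$ coefficient matrix of the plane-stochastic constraints) corresponding to the nonzero entries of $\frac{1}{n}A$ are linearly dependent. Equivalently, by Lemma~\ref{OpLem}, the constraints active at $\frac{1}{n}A$ do not pin it down uniquely; we will exhibit a nonzero perturbation $D$ such that $\frac{1}{n}A \pm D$ both lie in $\mathcal{P}_n$. Since $A$ is a $(0,1)$-line-permutation tensor, it has exactly $n^2$ ones (one per line in the $k$-direction, say), hence $\frac{1}{n}A$ has exactly $n^2$ nonzero entries, each equal to $\frac1n$, and $n^3-n^2$ zero entries. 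The idea is that $n^2$ is simply too many nonzeros relative to the $3n$ equations plus the structure forced on a $(0,1)$-tensor, once $n\ge 3$.

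First I would set up the combinatorics: a $(0,1)$-line-permutation tensor of order $3$ and dimension $n$ is, via the Jurkat--Ryser bijection, a Latin square $L=(L_{ij})$, where $a_{ijk}=1$ iff $L_{ij}=k$. The support of $A$ is thus $S=\{(i,j,L_{ij}) : 1\le i,j\le n\}$. Next, I would look for a nonzero real tensor $D=(d_{ijk})$ supported on $S$ (so that $\frac1n A + tD \ge 0$ for small $|t|$, using that every nonzero entry of $\frac1n A$ is the strictly positive $\frac1n$) satisfying all the homogeneous plane-stochastic equations: $\sum_{i,j} d_{ijk}=0$ for each $k$, $\sum_{j,k} d_{ijk}=0$ for each $i$, and $\sum_{i,k} d_{ijk}=0$ for each $j$. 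Writing $d_{i j L_{ij}}=:c_{ij}$ and $d_{ijk}=0$ off $S$, these three families of equations become, respectively, $\sum_{i,j:\,L_{ij}=k} c_{ij}=0$ for each $k$, $\sum_{j} c_{ij}=0$ for each $i$, and $\sum_{i} c_{ij}=0$ for each $j$. So the task reduces to: given a Latin square $L$, find a nonzero $n\times n$ matrix $C=(c_{ij})$ with zero row sums, zero column sums, and zero sums over each symbol class of $L$. This is a homogeneous linear system in $n^2$ unknowns with $3n$ equations, but the equations are far from independent — each of the three families sums to the same total $\sum_{ij}c_{ij}$ — so the solution space has dimension at least $n^2-3n+2$, which is positive for $n\ge 3$ (it equals $1$ at $n=3$, where one checks directly that a nonzero such $C$ exists for the unique-up-to-symmetry $3\times 3$ Latin square). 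Producing an explicit nonzero $C$ is the crux.

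The cleanest route to an explicit $C$: pick two rows $i_1\ne i_2$ and two columns $j_1\ne j_2$ and try the rank-one-type pattern $c_{i_1 j_1}=c_{i_2 j_2}=1$, $c_{i_1 j_2}=c_{i_2 j_1}=-1$, all other entries $0$. This automatically has zero row sums and zero column sums. It has zero symbol sums iff the multiset of symbols $\{L_{i_1 j_1}, L_{i_2 j_2}\}$ equals $\{L_{i_1 j_2}, L_{i_2 j_1}\}$ — i.e., iff this $2\times 2$ subsquare is an "intercalate" (a $2\times 2$ Latin subsquare) or degenerates appropriately. If $L$ has an intercalate we are done immediately; if not, a short counting argument on the $2\times 2$ submatrices, combined with the freedom to combine several such rank-one patterns (the solution space has dimension $\ge n^2-3n+2\ge 1$), still forces the existence of a nonzero admissible $C$. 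I would phrase the final argument as: the homogeneous system above always has a nonzero solution for $n\ge 3$ by the dimension count, hence $\frac1n A$ lies on a nontrivial line segment inside $\mathcal{P}_n$, hence is not extreme.

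The main obstacle I anticipate is the dimension count's \emph{sharpness} at small $n$: the bound $n^2-3n+2>0$ guarantees a nonzero solution, but one must be careful that the $3n$ constraint functionals genuinely have rank $\le 3n-2$ on the restricted variable space (the two visible dependencies among the three row-sum/column-sum/symbol-sum families give rank $\le 3n-2$, matching $\operatorname{rank}(P)=3n-2$ from the earlier theorem), and that the perturbed tensor stays nonnegative — the latter is free because every active (nonzero) entry of $\frac1n A$ is the strictly positive value $\frac1n$, so a sufficiently small multiple of $C$ keeps all entries in $[0,1]$ and all off-support entries exactly $0$. Handling $n=3,4$ may warrant an explicit exhibit, but the general case is the dimension inequality, which is clean.
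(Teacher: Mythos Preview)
Your approach is correct but considerably more elaborate than the paper's. The paper dispatches the proposition in two lines by invoking the fact, already recalled in Section~\ref{Sec:Preli} from \cite{BrCs75laa}, that any extreme point of $\mathcal{P}_n$ has at most $3n-2$ positive entries; since a $(0,1)$-line-permutation tensor $A$ has exactly $n^2$ ones, $\frac{1}{n}A$ has $n^2>3n-2$ positive entries once $n\ge 3$, and is therefore not extreme. Your argument unpacks this same inequality $n^2-(3n-2)>0$ from first principles: you restrict the coefficient matrix $P$ to the $n^2$ support columns, observe that the resulting homogeneous system has rank at most $3n-2$, and conclude the kernel is nontrivial, producing the perturbation $D$. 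This is precisely the mechanism behind the $3n-2$ bound (via Lemma~\ref{lem:LP}), so the two proofs are the same in content; yours is self-contained, while the paper's is a one-line appeal to a cited result. The intercalate discussion is an unnecessary detour --- as you yourself note at the end, the dimension count alone already finishes the proof for all $n\ge 3$ without any explicit construction. One minor slip: $n^2-3n+2$ equals $2$ at $n=3$, not $1$.
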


  \proof Let $A\in \mathcal{L}_n$ be a (0,1)-permutation tensor for $n>2$. Then
  $\frac{1}{n} A$ contains $n^2$ nonzero entries (i.e., 1's). However, any extreme point of
   $\mathcal{P}_n$ cannot have more than $3n-2$ positive entries (see, e.g., \cite{BrCs75laa}).
  \qed

We conclude the paper with a question regarding Proposition \ref{Pro12}: How about other non-(0,1) extreme points of  $\mathcal{L}_n$? That is, is it possible that
 some $A$ is an extreme point of $\mathcal{L}_n$ and  $\frac{1}{n} A$ is an extreme point of $\mathcal{P}_n$? If so, then what is its structure?

\bigskip
\noindent
{\bf Acknowledgement.}
  The authors thank Chi-Kwong Li for his comments in the early stage of the project.  Fuzhen Zhang thanks the SKKU Applied Algebra \& Optimization Research Center of South Korea for the hospitality during the May 2017 Workshop on Matrix/Operator Theory .
   Fuzhen Zhang's work
 was partially supported by an NSU PFRDG Research Scholar grant and by National Natural Science Foundation of China (NNSF) No.\,11571220 via Shanghai University. Xiao-Dong Zhang's work
  was partially supported by NNSF No.\,11531001, No.\,11271256 and NSFC-ISF Research Program (No. 11561141001).

\end{document}